\definecolor{blue}{rgb}{0.0,0.0,0.9}
\theoremstyle{plain}
\newtheorem{theorem}{Theorem}[section]
\newtheorem*{theorem*}{Theorem}
\newtheorem{proposition}[theorem]{Proposition}
\newtheorem*{proposition*}{Proposition}
\newtheorem*{corollary*}{Corollary}
\theoremstyle{definition}
\newtheorem{remark}[theorem]{Remark}
\newtheorem{definition}[theorem]{Definition}
\numberwithin{equation}{section}
\renewcommand{\Re}{\operatorname{Re}}
\title[Asymptotic Density of $k$-tuples with arbitrary gcd conditions]{On the Asymptotic Density of $k$-tuples of Positive Integers Satisfying Arbitrary GCD Conditions}
\author{Chan Ieong Kuan}
\address{School of Mathematics (Zhuhai), Sun Yat-Sen University, Zhuhai, Guangdong Province, 519082, China}
\email{kuanchi3@mail.sysu.edu.cn}
\date{\today}
\begin{document}

\maketitle

\begin{abstract}
  We consider the problem of counting $k$-tuples of positive integers satisfying any arbitrary set of gcd conditions, where every integer is not larger than $x$. We first establish the conditions to guarantee the existence of such tuples, and then obtain asymptotic formulae for the count of such tuples with the help of a multivariable Dirichlet series. Part of this work can be viewed as a generalization of T\'oth's work, where the conditions are pairwise.
\end{abstract}

\section{Introduction}

Let $\mathbb{N}$ be the set of natural numbers ($0$ not included) and $k \in \mathbb{N}$, $k \geq 2$. This paper concerns with counting $k$-tuples of positive integers satisfying any arbitrary set of gcd conditions.

The most well-known instance of this type of problem is concerning the probability of two positive integers being coprime, which is known to be $1/\zeta(2)$. Nymann \cite{NYMANN1972469} has proven the following generalization to $k$-tuples,
\[ \sum_{\substack{n_1,\dots,n_k \leq x \\ (n_1,\dots,n_k) = 1}} 1 = \frac{x^k}{\zeta(k)} + \begin{cases} O(x \log x), &\text{ if $k=2$,} \\ O(x^{k-1}), &\text{ if $k \geq 3$}. \end{cases} \]

This problem leads to various generalizations, a few of which are particularly notable. For example, the count of $k$-tuples with pairwise coprime components has been established by T\'oth \cite{Tóth01022002},
\[ \sum_{\substack{n_1,\dots,n_k \leq x \\ (n_i,n_j) = 1 \\ 1 \leq i<j \leq k}} 1 = A_k x^k + O(x^{k-1} (\log x)^{k-1}), \]
where the constant $A_k$ is
\begin{equation} \label{constAK}
  A_k = \prod_p (1-p^{-1})^{k-1} \left( 1 + \frac{k-1}{p} \right).
\end{equation}

There are a few natural variations of this setting, some examples being requiring only some pairs to be coprime (see \cite{juanR, hu2014, Toth2024}), changing pairwise conditions to $r$-wise coprimality (see \cite{Hu2013,TOTH2016105}), requiring pairwise non-coprime instead (see \cite{heyman2014,Toth2024}), among others. However, it seems that the literature so far has not allowed for arbitrary set of gcd conditions. This paper aims to fill this gap.

A good portion of this paper can be viewed as adapting ideas from T\'oth's work \cite{Toth2024} into our setting. However, T\'oth's work \cite{Toth2024} only deals with the case of pairwise relative primality. As a result, we have to come up with a way to generalize for any set of gcd conditions, which is the main innovation of this paper.

Section \ref{sectNotation} sets up the necessary notations and definitions for us to effectively write down sets of gcd conditions. Section \ref{sectResults} states the main results using said notation. Section \ref{sectAdm} deals with the problem of admissibility of a given set of gcd conditions. Section \ref{sectDensity} starts by establishing the properties of a pair of related Dirichlet series, then derives a precise expression for the series at a special point, and finally counts the number of tuples satisfying the set of gcd conditions.

\section{Notations and definitions} \label{sectNotation}
Let $k \geq 2$ be a positive integer and $S = \{1,2,\dots,k \}$. Denote the power set of $S$ as $\mathcal{P}(S)$, and $\mathcal{C}(S)$ to be the collection of subsets of $S$ in $\mathcal{P}(S)$ with less than $2$ elements, i.e.
\[ \mathcal{C}(S) = \{ \phi \} \cup \left\{ \{ i \} | i \in S \right\}. \]

In order to specify a set of gcd conditions on a tuple of positive integers in $\mathbb{N}^k$, we define the following.
\begin{definition}
  Let $\mathcal{Q}$ be a subset of $\mathcal{P}(S) \setminus \mathcal{C}(S)$. Take a function $f: \mathcal{Q} \to \mathbb{N}$. $(\mathcal{Q},f)$ is said to be \emph{a set of gcd conditions}. It places conditions on a tuple $(n_1,\dots,n_k) \in \mathbb{N}^k$ in the following way: for $T \in \mathcal{Q}$,
\[ f(T) = \operatorname{gcd} \{ n_i | i \in T \}. \]
  A set of gcd conditions $(\mathcal{Q},f)$ is said to be \emph{admissible} if there exists tuples of natural numbers that satisfy the conditions.
\end{definition}

Each element $T$ of $\mathcal{Q}$ in the above definition is a subset of $S$, and can be viewed as a subset of indices in $\{1,\dots,k\}$. As such, each element in $\mathcal{Q}$ corresponds to a gcd condition.

%\begin{definition}
%  A admissible set of gcd conditions $(\mathcal{Q},f)$ is said to be \emph{minimal} if removing any element from $\mathcal{Q}$ allows for a strictly larger set of tuples.
%\end{definition}

One can view the above definitions as some variation of generalizing the concept of a weighted graph. The following definitions also have a similar flavor.

\begin{definition}
  For a set of gcd conditions $(\mathcal{Q},f)$, an index $i \in S$ is said to be \emph{isolated} with respect to $(\mathcal{Q},f)$ if $\mathcal{Q} \subseteq \mathcal{P}(S \setminus \{i \})$.
\end{definition}

\begin{definition}
  For a set of gcd conditions $(\mathcal{Q},f)$, a subset $W$ of $S$ is said to be a \emph{cover} of $(\mathcal{Q},f)$ if for any $T \in \mathcal{Q}$, $|T \setminus W| \leq 1$.
\end{definition}

\begin{definition}
  Let $W$ be a subset of $S$. An element $x \in S$ is a \emph{neighbor of $W$ in $(\mathcal{Q},f)$} if there exists $T \in \mathcal{Q}$ such that $T \setminus W = \{ x \}$. We denote $N(W)$ to be the set of neighbors of $W$ in $(\mathcal{Q},f)$.
\end{definition}

\begin{definition}
  Let $W$ be a subset of $S$. $W$ is \emph{an independent set with respect to $(\mathcal{Q},f)$} if there does not exist $T \in \mathcal{Q}$ such that $T \subseteq W$.\\
\end{definition}

The following definition is useful for stating the counting problem.
\begin{definition}
  Let $\mathcal{G} = (\mathcal{Q},f)$ be a set of gcd conditions. Define a function $\delta_{\mathcal{G}}: \mathbb{N}^k \to \{ 0, 1\}$ as follows,
\[ \delta_{\mathcal{G}}(n_1,\dots,n_k) = \begin{cases} 1, &\text{ if $(n_1,\dots,n_k)$ satisfies the gcd conditions,} \\ 0, &\text{ otherwise.} \end{cases} \]
\end{definition}

It will be convenient for us to break the analysis into prime components, so we introduce some notation aiding us in this direction.
\begin{definition}
  Let $\mathcal{G} = (\mathcal{Q},f)$ be a set of gcd conditions. For prime $p$, define $g_p: \mathcal{Q} \to \mathbb{Z}_{\geq 0}$ and $f_p: \mathcal{Q} \to \mathbb{N}$ as follows,
  \[ g_p(T) := \operatorname{ord}_p(f(T)) \text{ and } f_p(T) := p^{g_p(T)} \text{ where } T \in \mathcal{Q}. \]
  For each $i \in S$, prime $p$, define $v_i^{(p)} := \operatorname{max} \left( \{ 0 \} \cup \{ g_p(T) | T \in \mathcal{Q}, i \in T \} \right)$. We say that a prime $p$ \emph{shows up in a set of gcd conditions $\mathcal{G} = (\mathcal{Q},f)$} if there exists $T \in \mathcal{Q}$ such that $g_p(T) > 0$. Let $\mathcal{U}_{\mathcal{G}}$ denote the set of primes showing up in the gcd conditions,
  \[ \mathcal{U}_{\mathcal{G}} := \{ p \text{ prime } | \text{ for some } T \in \mathcal{Q}, g_p(T) > 0 \}. \]
\end{definition}
We note that for $\mathcal{G} = (\mathcal{Q},f)$, for $T \in \mathcal{Q}$, $f(T) = \prod_p f_p(T)$. The quantities $v_i^{(p)}$ are crucial for checking admissibility. For any isolated index $i$ of $S$, $v_i^{(p)} = 0$.

Last but not least, we introduce a way to reduce the set of gcd conditions which helps us break the analysis into prime components.

\begin{definition}
  Let $p$ be a prime and $\mathcal{G} = (\mathcal{Q},f)$ be an admissible set of gcd conditions. Let $Z_p$ be the following set of indices,
  \[ Z_p := \{ i \in S | \text{ for some } T \in \mathcal{Q}, i \in T, \text{ for all } j \in T\setminus \{ i \}, v_j^{(p)} > g_p(T) \}. \]
  Let $S_p := S \setminus Z_p$. We define a new collection of sets $\widetilde{\mathcal{Q}_p}$ as follows,
  \[ \widetilde{\mathcal{Q}_p} = \left\{ \{ i \in T | g_p(T) = v_i^{(p)} \} | T \in \mathcal{Q} \cap \mathcal{P}(S_p) \right\}. \]
  We define $\widetilde{f_p}: \widetilde{\mathcal{Q}_p} \to \mathbb{N}$ by $\widetilde{f_p}(T_p) = 1$. We call $\widetilde{\mathcal{G}_p} := (\widetilde{\mathcal{Q}_p}, \widetilde{f_p})$ to be \emph{a reduced set of gcd conditions of $\mathcal{G}$ with respect to $p$}.
\end{definition}
We note that for a prime $p \notin \mathcal{U}_{\mathcal{G}}$, $Z_p = \phi$ and $\widetilde{\mathcal{G}_p} = (\mathcal{Q}, f_p)$.

\section{Main results} \label{sectResults}

The following theorem gives an explicit way to check for admissibility of a given set of gcd conditions.
\begin{theorem} \label{AdmThm}
  A set of gcd conditions $(\mathcal{Q},f)$ is admissible if and only if for any prime $p$, any element $T \in \mathcal{Q}$, $g_p(T) = \operatorname{min} \{ v_i^{(p)} | i \in T \}$.
\end{theorem}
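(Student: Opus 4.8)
The plan is to prove both directions by working prime-by-prime, using the factorization $f(T) = \prod_p f_p(T)$ together with the fact (to be checked) that a tuple $(n_1,\dots,n_k)$ satisfies the conditions if and only if for every prime $p$ the vector of exponents $(\operatorname{ord}_p(n_1),\dots,\operatorname{ord}_p(n_k))$ satisfies the ``exponent version'' of the conditions, namely $g_p(T) = \min\{\operatorname{ord}_p(n_i) \mid i \in T\}$ for all $T \in \mathcal{Q}$. This reduces admissibility of $(\mathcal{Q},f)$ to: for every prime $p$, the system of equations $\min\{x_i \mid i \in T\} = g_p(T)$, $T \in \mathcal{Q}$, has a solution in $\mathbb{Z}_{\geq 0}^k$. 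For all but finitely many primes (those not in $\mathcal{U}_{\mathcal{G}}$) all $g_p(T) = 0$ and the zero vector works, so only finitely many primes are in question and the global statement follows from the local one.

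For the \emph{necessity} direction, suppose $(\mathcal{Q},f)$ is admissible, so some tuple satisfies the conditions; fix a prime $p$ and let $x_i = \operatorname{ord}_p(n_i)$. For any $T \in \mathcal{Q}$ and any $i \in T$ we have $x_i \geq \min_{j \in T} x_j = g_p(T)$; hence if we take the max over all $T' \in \mathcal{Q}$ containing $i$ (and over $\{0\}$), we get $x_i \geq v_i^{(p)}$. Therefore for a fixed $T \in \mathcal{Q}$, $\min_{i \in T} x_i \geq \min_{i \in T} v_i^{(p)}$, i.e. $g_p(T) \geq \min_{i \in T} v_i^{(p)}$. The reverse inequality is immediate: for each $i \in T$, $v_i^{(p)} \geq g_p(T)$ by definition of $v_i^{(p)}$, so $\min_{i \in T} v_i^{(p)} \geq g_p(T)$. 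Combining gives $g_p(T) = \min\{v_i^{(p)} \mid i \in T\}$.

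For \emph{sufficiency}, assume the stated condition holds for every prime $p$ and every $T \in \mathcal{Q}$. I claim the tuple $n_i := \prod_p p^{v_i^{(p)}}$ (a finite product, since $v_i^{(p)} = 0$ for $p \notin \mathcal{U}_{\mathcal{G}}$) satisfies all the gcd conditions. Indeed, fix $T \in \mathcal{Q}$; then $\operatorname{ord}_p\big(\gcd\{n_i \mid i \in T\}\big) = \min_{i \in T} v_i^{(p)} = g_p(T)$ by hypothesis, and since this holds for every $p$, $\gcd\{n_i \mid i \in T\} = \prod_p p^{g_p(T)} = \prod_p f_p(T) = f(T)$. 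So the tuple is a witness to admissibility.

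The argument is essentially a minimax bookkeeping exercise, so the main thing to be careful about is the ``reduction to prime components'' step: one must justify cleanly that $\gcd\{n_i \mid i \in T\} = f(T)$ for all $T \in \mathcal{Q}$ holds if and only if the exponent identities hold at every prime, and that only finitely many primes are nontrivial. Once that is set up, the real content is the double inequality $g_p(T) \leq \min_i v_i^{(p)}$ (trivial) versus $g_p(T) \geq \min_i v_i^{(p)}$ (which is exactly what admissibility forces and what the candidate tuple $n_i = \prod_p p^{v_i^{(p)}}$ is designed to realize). I expect no serious obstacle beyond stating the local-global reduction precisely; the potential subtlety is making sure the maximal choice $x_i = v_i^{(p)}$ is simultaneously consistent across all $T \in \mathcal{Q}$, which is precisely guaranteed by the hypothesis and is the crux of the ``if'' direction.
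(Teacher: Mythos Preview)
Your proposal is correct and follows essentially the same approach as the paper: both establish the trivial inequality $g_p(T) \leq \min_{i \in T} v_i^{(p)}$, use a witness tuple to force $\operatorname{ord}_p(n_i) \geq v_i^{(p)}$ and hence the reverse inequality, and for sufficiency construct the explicit witness $n_i = \prod_p p^{v_i^{(p)}}$. If anything, you are more careful than the paper in spelling out the local--global reduction and the finiteness of the relevant set of primes.
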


The next result gives the asymptotic formula counting the number of tuples satisfying an admissible set of gcd conditions with error term.
\begin{theorem} \label{AsymThm}
  Let $\mathcal{G} = (\mathcal{Q},f)$ be an admissible set of gcd conditions. Let $W$ be a cover of $\mathcal{G}$ not including any isolated indices.

For prime $p$, we have $\widetilde{\mathcal{G}_p} := (\widetilde{\mathcal{Q}_p}, \widetilde{f_p})$, the reduced set of gcd conditions of $\mathcal{G}$ with respect to $p$. Let $I_p$ be the isolated indices of $S_p$ with respect to $\widetilde{\mathcal{G}_p}$. Define $W_p := W \setminus (Z_p \cup I_p)$, and for a subset $V_p \subseteq W_p$, $M(V_p) := N(V_p) \setminus W_p$.

We have the following asymptotic formula,
  \[ \sum_{n_1,\dots,n_k \leq x} \delta_\mathcal{G}(n_1,\dots,n_k) = A_{\mathcal{G}} x^k + O \left(x^{k-1} (\log x)^{k-1} \right), \]
where $A_{\mathcal{G}}$ is given by the following expression,
  \[ A_{\mathcal{G}} = \prod_p \left( p^{- \sum_{i \in S} v_i^{(p)}} \!\!\! \sum_{V_p \subseteq W_p} {}'  p^{-|V_p|} (1-p^{-1})^{|W_p| - |V_p| + |M(V_p)| + |Z_p|} \right). \]
The sum $\sum_{V_p \subseteq W_p}'$ here refers to summing over independent subsets $V_p$ of $W_p$.
\end{theorem}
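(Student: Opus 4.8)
The plan is to reduce the count to a multiplicative problem over primes via a multivariable Dirichlet series, then extract the main term and error term by a Perron-type / elementary summation argument. First I would write $\delta_{\mathcal G}(n_1,\dots,n_k)$ as a product over primes of local indicator functions: for each prime $p$, the condition $f(T)=\gcd\{n_i: i\in T\}$ means exactly that $\operatorname{ord}_p(n_i)\geq g_p(T)$ for all $i\in T$ with equality for at least one such $i$. Admissibility (Theorem \ref{AdmThm}) guarantees $g_p(T)=\min\{v_i^{(p)}:i\in T\}$, so the ``$\geq$'' part forces $\operatorname{ord}_p(n_i)\geq v_i^{(p)}$ for every $i$; after factoring out $p^{v_i^{(p)}}$ from $n_i$ (this is where the prefactor $p^{-\sum_i v_i^{(p)}}$ comes from), the residual condition is a reduced set of gcd conditions of the ``pairwise-coprime-like'' type encoded by $\widetilde{\mathcal G_p}$. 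The indices in $Z_p$ are those whose $p$-part is already pinned down by a stricter neighbor, and the isolated indices $I_p$ of $S_p$ carry no residual constraint — this is the bookkeeping that turns $W$ into the effective cover $W_p$.

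Next I would set up the Dirichlet series $D(s_1,\dots,s_k)=\sum_{n_1,\dots,n_k\geq 1}\delta_{\mathcal G}(n_1,\dots,n_k) n_1^{-s_1}\cdots n_k^{-s_k}$ and, using the prime decomposition above, write it as an Euler product $\prod_p D_p(p^{-s_1},\dots,p^{-s_k})$ where each local factor $D_p$ is an explicit finite-plus-geometric expression. I would then compare $D$ with $\prod_{i}\zeta(s_i)$: the quotient $D(s_1,\dots,s_k)/\prod_i\zeta(s_i)^{?}$ — with the right powers of $\zeta$ chosen so that the quotient converges in a region strictly larger than $\Re s_i>1$ — should be analytic and bounded in a half-space that allows the error term $O(x^{k-1}(\log x)^{k-1})$. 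The residue computation at $s_1=\cdots=s_k=1$ then produces $A_{\mathcal G}$, and I would check that the local residue of $D_p$ at that point is precisely
\[
p^{-\sum_{i\in S} v_i^{(p)}}\sum_{V_p\subseteq W_p}{}' p^{-|V_p|}(1-p^{-1})^{|W_p|-|V_p|+|M(V_p)|+|Z_p|},
\]
by expanding the local indicator: each independent subset $V_p\subseteq W_p$ records the set of covered indices attaining equality in their gcd bound, each such index contributing $p^{-1}$, each remaining covered index and each neighbor $M(V_p)$ and each $Z_p$-index contributing a $(1-p^{-1})$ factor from the corresponding geometric series, with independence of $V_p$ ensuring no gcd condition among covered indices is violated. (Alternatively, and perhaps more cleanly, one can avoid complex analysis entirely and run Tóth's elementary argument: use the cover $W$ to parametrize tuples by the values $(n_i)_{i\in W}$, express the count of completions $(n_j)_{j\notin W}$ as a product of local densities times $x^{|S\setminus W|}$ with error $O(x^{|S\setminus W|-1}\log\text{-powers})$, then sum over $(n_i)_{i\in W}\leq x$; the $(\log x)^{k-1}$ comes from summing $\prod$ of multiplicative densities over a box of dimension $|W|$.)

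For the error term I would follow the standard device of peeling: fix the variables indexed by the cover $W$ and sum the inner variables using the one-dimensional estimate $\sum_{m\leq y}\mathbf 1=y+O(1)$ repeatedly, which converts each free inner variable into a factor $x+O(1)$ and a convergent multiplicative sum over the cover variables; the accumulated error is $O(x^{k-1})$ times a sum of $|W|-1\leq k-1$ divisor-type factors over a box, giving the $(\log x)^{k-1}$. The main obstacle, and the genuinely new part beyond Tóth, is verifying that the combinatorial reduction $(\mathcal Q,f)\rightsquigarrow\widetilde{\mathcal G_p}$ is correct and that the resulting local density matches the stated formula — in particular, that $Z_p$ correctly captures ``redundant'' indices and that the neighbor set $M(V_p)=N(V_p)\setminus W_p$ accounts for exactly the extra inclusion-exclusion factors. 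I would isolate this as a lemma computing $D_p$ (or the local completion density) in closed form, prove it by induction on $|\widetilde{\mathcal Q_p}|$ or by a direct inclusion-exclusion over which inner neighbors are forced to be divisible by $p$, and then the global assembly is routine.
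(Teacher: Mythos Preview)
Your proposal is essentially the paper's proof: set up $D(s_1,\dots,s_k)$, factor out $\prod_i \zeta(s_i)$ to obtain a series $D'$ that converges absolutely past $\Re s_i=1$, compute each local factor $D_p'$ by the cover/independent-subset decomposition (the shift $a_i\mapsto a_i-v_i^{(p)}$ handling primes in $\mathcal U_{\mathcal G}$ via $\widetilde{\mathcal G_p}$), and then finish elementarily by writing $\delta_{\mathcal G}=\mathbf 1*(\mu*\delta_{\mathcal G})$ and applying $\lfloor x/d_i\rfloor=x/d_i+O(1)$ coordinatewise --- no Perron is used. One small slip to correct when you execute the local computation: $V_p$ is the set of covered indices with residual exponent $b_i\geq 1$, i.e.\ those \emph{not} attaining equality, and the $(1-p^{-1})$ factors on $(W_p\setminus V_p)\cup M(V_p)\cup Z_p$ do not come from geometric tails but from the leftover $\prod_i(1-p^{-s_i})$ after the $\zeta$-factors are divided out.
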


\begin{remark}
  If all elements inside $\mathcal{Q}$ are of cardinality $2$ and $\mathcal{U}_\mathcal{G} = \phi$, we have $v_i^{(p)} = 0$ and $Z_p = \phi$ for all primes $p$. One can then verify the formula for $A_\mathcal{G}$ matches exactly the one in Theorem 9 of T\'oth's work \cite{Toth2024}.
\end{remark}

\begin{remark}
  Let $r \in \mathbb{Z}$ satisfy $2 \leq r \leq k$. Let $\mathcal{Q} = \{ T \in \mathcal{P}(S) | \, \, |T | = r \}$ and $f: \mathcal{Q} \to \mathbb{N}$ be the function $f(T) = 1$ for all $T \in \mathcal{Q}$. This is the case of requiring $r$-wise relative coprimality. In this case, we have $v_i^{(p)} = 0$ and $Z_p = \phi$ for all primes $p$. Taking our cover $W = S$, $V$ runs through all subsets of size less than $r$ and $M(V) = \phi$ here. Our formula for $A_\mathcal{G}$ reduces to
  \[ A_\mathcal{G} = \prod_p \left( \sum_{x = 0}^{r-1} {{k}\choose{x}} p^{-x} (1-p^{-1})^{k-x} \right) . \]
For $r=2$, it can be shown to match the $A_k$ constant in equation \eqref{constAK}. And with some effort, one can show that this matches the constant shown in Theorem 2.2 of \cite{TOTH2016105}.
\end{remark}

It should be noted that the power of the logarithm in Theorem \ref{AsymThm} is not optimized. In particular, by tracking through our proof, one can show that the power of the logarithm can be replaced by $\operatorname{max} \{ |N(\{i \})| i \in S \}$.

With some minor modifications to the quantities $v_i^{(p)}$, we can also accommodate gcd conditions of the form $ a | (n_{i_1},\dots, n_{i_j})$.

\section{Admissibility of a set of gcd conditions} \label{sectAdm}

In this section, we work with the assumptions of Theorem \ref{AdmThm}. From the angle of prime factorizations, it is obvious that we can consider the gcd conditions related to each prime separately. With this in mind, we tackle the proof of Theorem \ref{AdmThm}.

%Due to the fact that only finitely many primes show up in the product $\prod_{T \in \mathcal{Q}} f(T)$, for almost all primes $p$, we have the fact that for any $T \in \mathcal{Q}$,  $g_p(T) = 0$. For these primes, we see that for $i \in S$, $v_i^{(p)} = 0$, which implies that for any element $T \in \mathcal{Q}$, $g_p(T) = \operatorname{min} \{ v_i^{(p)} | i \in T \}$. If we restrict a tuple to avoid having these primes in their prime factorization, then the set of gcd conditions related to these primes are easily satisfied.
%
%As such, our analysis is reduced to investigating the primes $p$ for which there exists $T \in \mathcal{Q}$ such that $g_p(T) > 0$.

\begin{proof}[Proof of Theorem \ref{AdmThm}]
  From the definitions in Theorem \ref{AdmThm}, for any $T \in \mathcal{Q}$, any $i \in T$, $v_i^{(p)} \geq g_p(T)$. As such, we have $g_p(T) \leq \operatorname{min} \{ v_i^{(p)} | i \in T \}$.

  Assume there exists an element $T_0 \in \mathcal{Q}$ such that $g_p(T_0) < \operatorname{min} \{ v_i^{(p)} | i \in T_0 \}$. Our goal is to prove that $(\mathcal{Q},f)$ is not admissible in this case. Suppose there exists a tuple $(n_1,\dots,n_k)$ satisfying the conditions. For $i \in S$, write $n_i = \prod_p p^{a_i^{(p)}}$. Then the gcd conditions imply that for $T \in \mathcal{Q}$, we have
\begin{equation} \label{shortEqn}
  g_p(T) = \operatorname{min} \{ a_i^{(p)} | i \in T \}.
\end{equation}
This implies for an element $T \in \mathcal{Q}$, for all $i \in T$, $a_i^{(p)} \geq g_p(T)$. Fixing $i \in S$ and running over all elements $T \in \mathcal{Q}$ that contain $i$, we see that $a_i^{(p)} \geq v_i^{(p)}$. Plugging this into \eqref{shortEqn} imply
\[ g_p(T) = \operatorname{min} \{ a_i^{(p)} | i \in T \} \geq \operatorname{min} \{ v_i^{(p)} | i \in T \}. \]
This is a contradiction for $T = T_0$. Thus, $(\mathcal{Q},f)$ is not admissible.

  Now assume that for any prime $p$, any element $T \in \mathcal{Q}$, $g_p(T) = \operatorname{min} \{ v_i^{(p)} | i \in T \}$. Our goal is to prove that $(\mathcal{Q},f)$ is admissible in this case. For $i \in S$, we construct $n_i = \prod_p p^{v_i^{(p)}}$. The tuple $(n_1,\dots,n_k)$ obviously satisfies all the constraints, which shows that $(\mathcal{Q},f)$ is admissible.

\end{proof}

\section{Asysmptotic density satisfying a specified set of gcd conditions} \label{sectDensity}

In this section, we work with the assumptions of Theorem \ref{AsymThm}. Our approach here is inspired by T\'oth \cite{Toth2024}.

We first investigate properties of the following Dirichlet series,
\[ D(s_1,\dots,s_k) := \sum_{n_1,\dots,n_k \geq 1} \frac{\delta_{\mathcal{G}}(n_1,\dots,n_k)}{n_1^{s_1}\dots n_k^{s_k}}. \]
This series converges absolutely for $\Re(s_1),\dots,\Re(s_k) > 1$. Let $\mu(n)$ be the M\"obius function. We define the following convolution symbol,
\[ (\mu * \delta_\mathcal{G})(n_1,\dots,n_k) := \sum_{d_1| n_1} \dots \sum_{d_k|n_k} \left(\prod_{i=1}^k \mu \left( \frac{n_i}{d_i} \right) \right) \delta_{\mathcal{G}}(d_1,\dots,d_k), \]
and its related Dirichlet series
\[ D'(s_1,\dots,s_k) := \sum_{n_1,\dots,n_k \geq 1} \frac{(\mu * \delta_\mathcal{G})(n_1,\dots,n_k)}{n_1^{s_1}\dots n_k^{s_k}}, \]
which also converges absolutely for $\Re(s_1),\dots,\Re(s_k) > 1$. In this same region of absolute convergence, the two series are related as follows,
\begin{equation} \label{deltaGconvolution}
  D(s_1,\dots,s_k) = \left( \prod_{i=1}^k \zeta(s_i) \right) D'(s_1,\dots,s_k).
\end{equation}

Our intermediate goal will be to give a more precise set of conditions for the region of convergence of $D'(s_1,\dots,s_k)$, which will aid in our proof of Theorem \ref{AsymThm}.

\subsection{The region of convergence for $D'(s_1,\dots,s_k)$}
For prime $p$, we let $\mathcal{G}_p := (\mathcal{Q},f_p)$. We have the factorization,
\begin{equation} \label{Deqn}
 D(s_1,\dots,s_k) = \prod_p \left( \sum_{a_1,\dots,a_k \geq 0} \frac{\delta_{\mathcal{G}_p}(p^{a_1},\dots,p^{a_k})}{p^{a_1 s_1 + \dots + a_k s_k}} \right).
\end{equation}
A similar factorization is valid for $D'(s_1,\dots,s_k)$. For convenience, we denote the series inside the product as
\[ D_p(s_1,\dots,s_k) := \sum_{a_1,\dots,a_k \geq 0} \frac{\delta_{\mathcal{G}_p}(p^{a_1},\dots,p^{a_k})}{p^{a_1 s_1 + \dots + a_k s_k}}, \]
and similarly define
\[ D_p'(s_1,\dots,s_k) := \sum_{a_1,\dots,a_k \geq 0} \frac{(\mu * \delta_{\mathcal{G}_p})(p^{a_1},\dots,p^{a_k})}{p^{a_1 s_1 + \dots + a_k s_k}}. \]
We note that both $D_p(s_1,\dots,s_k)$ and $D_p'(s_1,\dots,s_k)$ converge absolutely for $\Re(s_1),\dots,\Re(s_k) > 0$. Inside this region, when investigating the convergence of the corresponding infinite products, we can ignore finitely many primes. In particular, we can ignore all the primes in $\mathcal{U}_\mathcal{G}$.

In the rest of this subsection, fix a prime $p \notin \mathcal{U}_\mathcal{G}$. Recall $W$ is a cover of $\mathcal{G}_p$. We will calculate the series $D_p(s_1,\dots,s_k)$ by grouping the terms of $D_p$ according to subsets $V = \{ i \in W | a_i \geq 1 \}$ of $W$. For each such subset, we see that $a_i = 0$ for $i \in W \setminus V$, and that $V$ has to be independent in $\mathcal{G}_p$. Furthermore, for $i \in N(V) \setminus W$, $a_i = 0$ as well. For convenience, denote $M(V) := N(V) \setminus W$.

Let $\sum_{V \subseteq W}'$ denote the sum over independent subsets $V$ of $W$. We calculate that
\begin{align*}
  D_p(s_1,\dots,s_k) &= \sum_{a_1,\dots,a_k \geq 0} \frac{\delta_{\mathcal{G}_p}(p^{a_1},\dots,p^{a_k})}{p^{a_1 s_1 + \dots + a_k s_k}} \\
  &= \sum_{V \subseteq W} {}' \sum_{\substack{a_v \geq 1, v \in V \\ a_i = 0, i \in (W \setminus V) \cup M(V) \\ a_j \geq 0, j \in S \setminus (W \cup M(V))}} \frac{1}{p^{\sum_{k \in S} a_k s_k}} \\
  &= \sum_{V \subseteq W} {}' \prod_{v \in V} p^{-s_v} \left( 1 - p^{-s_v} \right)^{-1} \prod_{j \in S \setminus (W \cup M(V))} \left( 1 - p^{-s_j} \right)^{-1}
\end{align*}
In order to facilitate the recognition of the zeta functions, we continue as follows,
\begin{align*}
  D_p(s_1,\dots,s_k) &= \prod_{i \in S} \left(1-p^{-s_i} \right)^{-1} \sum_{V \subseteq W} {}' \prod_{v \in V} p^{-s_v} \prod_{i \in (W \setminus V) \cup M(V)} (1-p^{-s_i}),
\end{align*}
which also helps us recognize that
\begin{equation} \label{DprimeEqn}
  D_p'(s_1,\dots,s_k) = \sum_{V \subseteq W} {}' \prod_{v \in V} p^{-s_v} \prod_{i \in (W \setminus V) \cup M(V)} (1-p^{-s_i}).
\end{equation}

To understand \eqref{DprimeEqn} further, we rewrite the above as:
\begin{align*}
 D_p'(s_1,\dots,s_k) &= \sum_{V \subseteq W} {}' \prod_{v \in V} \frac{1}{p^{s_v}} \prod_{i \in (W \setminus V) \cup M(V)} \left( \sum_{a_i \geq 0} \frac{\mu(p^{a_i})}{p^{a_i s_i}} \right) \\
  &=: \sum_{a_1,\dots,a_k \geq 0} \frac{ b(a_1,\dots,a_k) }{p^{\sum_{i \in S} a_i s_i}},
\end{align*}
for constants $b(a_1,\dots,a_k)$, which are only dependent on $W$ and $\mathcal{Q}$. We note the following important properties of these constants:
\begin{enumerate}
  \item $b(0,\dots,0) = 1$,
  \item $b(a_1,\dots,a_k) \neq 0$ only if $a_1,\dots,a_k \in \{ 0, 1 \}$ and $\sum_{i \in S} a_i \neq 1$.
\end{enumerate}
The first property is observed since it can only occur in the term $V = \phi$. 

For the second property, for $i \notin W$, if $a_i \geq 1$, it can only show up when $V \neq \phi$, implying a term of the form $p^{-s_v} p^{-a_is_i}$ for some $v \in V$, and hence $\sum_{i \in S} a_i \geq 2$. For $i \in W$, a term of the form $p^{-s_i}$ can only show up when $V = \phi$ or $V = \{ i \}$. However, these term show up with coefficients $-1$ and $+1$ respectively, which cancel out.

Hence, we can write
\begin{align*}
  D_p'(s_1,\dots,s_k) = 1 + \sum_{\substack{a_1,\dots,a_k \in \{0, 1\} \\ \sum_{i \in S} a_i \geq 2}} \frac{ b(a_1,\dots,a_k) }{p^{\sum_{i \in S} a_i s_i}}.
\end{align*}
Taking the product over $p \notin \mathcal{U}_\mathcal{G}$, and noting the product over $p \in \mathcal{U}_\mathcal{G}$ is a finite product, we have the following proposition.
\begin{proposition}
  Inside the region $\Re (s_1),\dots, \Re (s_k) > 0$, the function $D'(s_1,\dots,s_k)$ converges absolutely if $\Re (s_{i_1} + \dots + s_{i_j}) > 1$ for $2 \leq j \leq k$, $1 \leq i_1 < \dots < i_j \leq k$.
\end{proposition}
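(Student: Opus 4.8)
The plan is to pass to the Euler factorization and reduce the statement to a product of one–variable Dirichlet series estimates. Since the gcd conditions decouple prime by prime, the function $\mu * \delta_{\mathcal G}$ is multiplicative in the sense that $(\mu * \delta_{\mathcal G})(n_1,\dots,n_k) = \prod_p (\mu * \delta_{\mathcal G_p})(p^{\operatorname{ord}_p n_1},\dots,p^{\operatorname{ord}_p n_k})$, and likewise for $|\mu * \delta_{\mathcal G}|$; hence, by Tonelli's theorem, writing $\sigma_i := \Re(s_i)$,
\[ \sum_{n_1,\dots,n_k \geq 1} \frac{\left|(\mu * \delta_{\mathcal G})(n_1,\dots,n_k)\right|}{n_1^{\sigma_1}\cdots n_k^{\sigma_k}} \;=\; \prod_p \left( \sum_{a_1,\dots,a_k \geq 0} \frac{\left|(\mu * \delta_{\mathcal G_p})(p^{a_1},\dots,p^{a_k})\right|}{p^{a_1\sigma_1 + \dots + a_k\sigma_k}} \right). \]
So it suffices to prove that the product on the right converges. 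I would split it as the finite product over $p \in \mathcal U_{\mathcal G}$ times the product over $p \notin \mathcal U_{\mathcal G}$: since each $D_p'(s_1,\dots,s_k)$ converges absolutely when $\sigma_1,\dots,\sigma_k > 0$ (as already observed), every factor in the finite part is finite, and the whole problem reduces to the tail over $p \notin \mathcal U_{\mathcal G}$.

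For $p \notin \mathcal U_{\mathcal G}$ I would use the explicit expansion derived above,
\[ \sum_{a_1,\dots,a_k \geq 0} \frac{(\mu * \delta_{\mathcal G_p})(p^{a_1},\dots,p^{a_k})}{p^{a_1 s_1 + \dots + a_k s_k}} \;=\; 1 + \sum_{\substack{a_1,\dots,a_k \in \{0,1\} \\ a_1 + \dots + a_k \geq 2}} \frac{b(a_1,\dots,a_k)}{p^{a_1 s_1 + \dots + a_k s_k}}, \]
where the constants $b(a_1,\dots,a_k)$ depend only on $W$ and $\mathcal Q$, not on $p$. Let $C := \max |b(a_1,\dots,a_k)|$, a finite maximum over the finitely many exponent vectors in play. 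Taking absolute values and grouping the admissible exponent vectors by their support $E := \{ i \in S : a_i = 1 \}$, a subset of $S$ of size at least $2$, gives
\[ \sum_{a_1,\dots,a_k \geq 0} \frac{\left|(\mu * \delta_{\mathcal G_p})(p^{a_1},\dots,p^{a_k})\right|}{p^{a_1\sigma_1 + \dots + a_k\sigma_k}} \;\leq\; 1 + C \sum_{\substack{E \subseteq S \\ |E| \geq 2}} p^{-\sum_{i \in E}\sigma_i} \;=:\; 1 + c_p, \]
with $c_p \geq 0$.

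It then remains to show $\sum_{p \notin \mathcal U_{\mathcal G}} c_p < \infty$, since a product $\prod(1 + c_p)$ of nonnegative terms converges if and only if $\sum c_p$ converges. The sum over subsets $E$ of $S$ with $|E| \geq 2$ is finite, so I may interchange it with $\sum_p$, leaving finitely many one–variable sums $\sum_p p^{-\sum_{i \in E}\sigma_i}$; each converges exactly when $\sum_{i \in E}\sigma_i > 1$. The hypothesis $\Re(s_{i_1} + \dots + s_{i_j}) > 1$ for all $2 \leq j \leq k$ and $1 \leq i_1 < \dots < i_j \leq k$ says precisely that $\sum_{i \in E}\sigma_i > 1$ for every such $E$. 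Hence $\sum_{p \notin \mathcal U_{\mathcal G}} c_p < \infty$, the tail product converges, and together with the finite product over $\mathcal U_{\mathcal G}$ this shows $D'(s_1,\dots,s_k)$ converges absolutely on the stated region.

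The computation is routine bookkeeping once the reduction is set up; the one step deserving care — and the natural place for a slip — is the very first one, namely justifying the multivariable Euler factorization and the passage from absolute convergence of the series $D'$ to convergence of the product of the local absolute sums. This rests on observing cleanly that a gcd condition $f(T) = \operatorname{gcd}\{n_i : i \in T\}$ is equivalent to the family of conditions $g_p(T) = \min\{\operatorname{ord}_p n_i : i \in T\}$ over all primes $p$, so that $\delta_{\mathcal G}(n_1,\dots,n_k) = \prod_p \delta_{\mathcal G_p}(p^{\operatorname{ord}_p n_1},\dots,p^{\operatorname{ord}_p n_k})$ — a fact already implicit in \eqref{Deqn} — and this multiplicativity then propagates to $\mu * \delta_{\mathcal G}$ and to $|\mu * \delta_{\mathcal G}|$, which is what makes Tonelli applicable.
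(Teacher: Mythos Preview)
Your argument is correct and follows essentially the same route as the paper: you invoke the explicit expansion $D_p'(s_1,\dots,s_k) = 1 + \sum_{|E|\ge 2} b(a)\,p^{-\sum_{i\in E} s_i}$ for $p\notin\mathcal U_{\mathcal G}$, treat the finitely many primes in $\mathcal U_{\mathcal G}$ separately, and deduce convergence of the tail product from the hypothesis on $\Re(\sum_{i\in E} s_i)$. If anything, you are more explicit than the paper about the justification of the Euler factorization for $|\mu*\delta_{\mathcal G}|$ and the passage from $\sum_p c_p<\infty$ to convergence of $\prod_p(1+c_p)$, but these are exactly the routine details the paper leaves implicit.
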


\subsection{The primes that show up in the gcd conditions}
In order to write down the constant in the leading term precisely, we need to investigate the local factors $D_p'(s_1,\dots,s_k)$ for primes $p \in \mathcal{U}_\mathcal{G}$. In the rest of this subsection, fix a prime $p \in \mathcal{U}_\mathcal{G}$.

%Define $I_p$ to be the isolated indices with respect to $\mathcal{G}_p$.
We note that for $i \in Z_p$, if $a_i \neq v_i^{(p)}$, $\delta_{\mathcal{G}_p}(p^{a_1},\dots,p^{a_k}) = 0$.

Let $(p^{a_1},\dots,p^{a_k})$ satisfy $\mathcal{G}_p = (\mathcal{Q},f_p)$. For all $i \in S$, it is required that $a_i \geq v_i^{(p)}$. Thus, we write $a_i = v_i^{(p)} + b_i$. Note that for $i \in Z_p$, $b_i = 0$. We note that any gcd condition involving an index in $Z_p$ is automatically satisfied. 

Let $\widetilde{\mathcal{G}_p} := (\widetilde{\mathcal{Q}_p}, \widetilde{f_p})$ be the reduced set of gcd conditions of $\mathcal{G}$ with respect to $p$. For an element $T \in \mathcal{Q} \cap \mathcal{P}(S_p)$, we see that the original condition translates to
\[ g_p(T) = \operatorname{min} \{ v_i^{(p)} + b_i | i \in T \}. \]
Due to the admissibility of $\mathcal{G}_p$ and Theorem \ref{AdmThm}, we know that this condition reduces to
\[ \operatorname{min} \{ b_i | i \in T, g_p(T) = v_i^{(p)} \} = 0. \]
Note that the set above has at least two elements. This actually corresponds to the following element $T_p \in \widetilde{\mathcal{Q}_p}$, where
\[ T_p = \{ i \in T | g_p(T) = v_i^{(p)} \}, \text{ and } \widetilde{f_p}(T_p) = 1. \]
Given the discussion above, we can obtain the following relationship,
\[ \delta_{\mathcal{G}_p}(p^{a_1},\dots,p^{a_k}) =  \delta_{\widetilde{\mathcal{G}_p}}(p^{b_1},\dots,p^{b_k}). \]

We calculate $D_p(s_1,\dots,s_k)$ as follows,
\begin{align*}
  D_p(s_1,\dots,s_k) &= \sum_{a_1,\dots,a_k \geq 0} \frac{\delta_{\mathcal{G}_p}(p^{a_1},\dots,p^{a_k})}{p^{a_1 s_1 + \dots + a_k s_k}} \\
  &= \sum_{\substack{i \in S \\ a_i \geq v_i^{(p)}}} \frac{\delta_{\widetilde{\mathcal{G}_p}}(p^{a_1-v_1^{(p)}},\dots,p^{a_k-v_k^{(p)}})}{p^{a_1 s_1 + \dots + a_k s_k}} \\
  &= p^{- \sum_{i \in S} v_i^{(p)} s_i} \sum_{\substack{b_i \geq 0, i \in S_p \\ b_j = 0, j \in Z_p}} \frac{\delta_{\widetilde{\mathcal{G}_p}}(p^{b_1},\dots,p^{b_k})}{p^{b_1 s_1 + \dots + b_k s_k}}
\end{align*}

At this point, we have effectively reduced our problem to a similar one in the last subsection. Let $I_p$ be the isolated indices of $S_p$ with respect to $\widetilde{\mathcal{G}_p}$. Then $W_p := W \setminus (Z_p \cup I_p)$ is a cover of $\widetilde{\mathcal{G}_p}$. Calculating the series in the same way as in the last subsection, we can deduce that
\begin{align*}
  D_p(s_1,\dots,s_k) &= p^{- \sum_{i \in S} v_i^{(p)} s_i} \prod_{i \in S} (1-p^{-s_i})^{-1} \\
  &\qquad \times \sum_{V_p \subseteq W_p} {}' \prod_{v \in V_p} p^{-s_v} \prod_{i \in (W_p \setminus V_p) \cup M(V_p) \cup Z_p} (1-p^{-s_i}).
\end{align*}
It follows that
\begin{equation} \label{DprimeEqn2}
  D_p'(s_1,\dots,s_k) = p^{- \sum_{i \in S} v_i^{(p)} s_i} \sum_{V_p \subseteq W_p} {}' \prod_{v \in V_p} p^{-s_v} \prod_{i \in (W_p \setminus V_p) \cup M(V_p) \cup Z_p} (1-p^{-s_i}). 
\end{equation}
One can actually note that the whole analysis in this subsection also holds for $p \notin \mathcal{U}_\mathcal{G}$. In particular, when $p \notin \mathcal{U}_\mathcal{G}$, we have $\mathcal{G}_p = \widetilde{\mathcal{G}_p}$, $Z_p = \phi$, $v_i^{(p)} = 0$ and $I_p$ only includes isolated points that are never in $W$ to begin with. This implies that \eqref{DprimeEqn2} is valid for all primes $p$.

The value of $D'(s_1,\dots,s_k)$ at $s_1 = \dots = s_k = 1$ is particularly important for us, which can be expressed as
\begin{align}
  D'(1,\dots,1) = \prod_p \left( p^{- \sum_{i \in S} v_i^{(p)}} \!\!\! \sum_{V_p \subseteq W_p} {}'  p^{-|V_p|} (1-p^{-1})^{|W_p| - |V_p| + |M(V_p)| + |Z_p|} \right). \label{DprimeAllOnes}
\end{align}

\subsection{Proof of Theorem \ref{AsymThm}}
This subsection follows T\'oth's approach \cite{Toth2024} closely. From \eqref{deltaGconvolution}, we deduce the following equality,
\[ \delta_\mathcal{G}(n_1,\dots,n_k) = \sum_{d_1 | n_1} \dots \sum_{d_k | n_k} (\mu * \delta_\mathcal{G})(d_1,\dots,d_k). \]

Hence, we can calculate as below:
\begin{align}
  \sum_{n_1,\dots,n_k \leq x} \delta_\mathcal{G}(n_1,\dots,n_k) &= \sum_{n_1,\dots,n_k \leq x} \sum_{d_1 | n_1} \dots \sum_{d_k | n_k} (\mu * \delta_\mathcal{G})(d_1,\dots,d_k) \notag \\
  &= \sum_{d_1,\dots,d_k \leq x} (\mu * \delta_\mathcal{G})(d_1,\dots,d_k) \prod_{i=1}^k \left( \frac{x}{d_i}  + O(1) \right) \notag \\
  &=: B(x) x^k + O(E(x)), \label{FinalPlug}
\end{align}
where
\begin{align}
  B(x) :&= \sum_{d_1,\dots,d_k \leq x} \frac{(\mu * \delta_\mathcal{G})(d_1,\dots,d_k)}{d_1\dots d_k}, \label{MainTerm}\\
  E(x) :&= \sum_{\substack{u_1,\dots,u_k \in \{0,1\} \\ \sum_{i=1}^k u_i < k }} x^{\sum_{i=1}^k u_i} \sum_{d_1,\dots,d_k \leq x} \frac{|(\mu * \delta_\mathcal{G})(d_1,\dots,d_k)|}{d_1^{u_1}\dots d_k^{u_k}}. \label{ErrorTerm}
\end{align}

For the size of $E(x)$, let $u_1,\dotsm,u_k$ be fixed, and assume $u_i = 0$. Then for $j \in S \setminus \{i\}$, $(\frac{x}{d_j})^{u_j} \leq \frac{x}{d_j}$. As such, we have
\begin{align*}
  A(u_1,\dots,u_k) :=& x^{\sum_{i=1}^k u_i} \sum_{d_1,\dots,d_k \leq x} \frac{|(\mu * \delta_\mathcal{G})(d_1,\dots,d_k)|}{d_1^{u_1}\dots d_k^{u_k}} \\ 
  \leq &x^{k-1} \sum_{d_1,\dots,d_k \leq x} \frac{|(\mu * \delta_\mathcal{G})(d_1,\dots,d_k)|}{\prod_{j \in S \setminus \{i\}} d_j} \\
  \leq &x^{k-1} \prod_{p \leq x} \sum_{v_1,\dots,v_k \geq 0} \frac{|(\mu * \delta_\mathcal{G})(p^{v_1},\dots,p^{v_k})|}{p^{\sum_{j \in S \setminus \{i\}} v_j}} \\
  \ll &x^{k-1} \prod_{\substack{p \leq x \\ p \notin \mathcal{U}_\mathcal{G}}} \left( 1 + \frac{c_{i,1}}{p} + \frac{c_{i,2}}{p^2} + \dots + \frac{c_{i,k-1}}{p^{k-1}} \right),
\end{align*}
where for $1 \leq m \leq k-1$, $c_{i,m}$ are nonnegative integers dependent only on $\mathcal{G}$. In fact, one can track $c_{i,1} = \left|\{ j \in S | \{ i,j \} \in \mathcal{Q} \}\right|$. In any case, one has $c_{i,1} \leq k-1$. Hence, we have
\begin{align*}
  A(u_1,\dots,u_k) \ll& x^{k-1} \operatorname{exp}\left(\sum_{\substack{p \leq x \\ p \notin \mathcal{U}_\mathcal{G}}} \frac{k-1}{p} \right) \\
  \ll& x^{k-1} (\log x)^{k-1}.
\end{align*}
As a result, we also have $E(x) \ll x^{k-1} (\log x)^{k-1}$. All that remains is to analyze $A(x)$.

One can observe that as $x$ goes to infinity, $B(x)$ tends to $D'(1,\dots,1)$. As such, it is natural to bound the difference between $B(x)$ and $D'(1,\dots,1)$. To this end, we operate $B(x)$ as follows,
\begin{align}
  B(x) &= \sum_{d_1,\dots,d_k \geq 1} \frac{(\mu * \delta_\mathcal{G})(d_1,\dots,d_k)}{d_1\dots d_k} - \sum_{\substack{I \subseteq S \\ I \neq \phi}} B_I(x) \notag \\
  &= D'(1,\dots,1) - \sum_{\substack{I \subseteq S \\ I \neq \phi}} B_I(x), \label{Fun}
\end{align}
where
\begin{align*}
  B_I(x) &:= \sum_{\substack{d_i > x, i \in I \\ d_j \leq x, j \notin I}} \frac{(\mu * \delta_\mathcal{G})(d_1,\dots,d_k)}{d_1\dots d_k}.
\end{align*}

Following T\'oth's approach \cite{Toth2024} in analyzing $B_I(x)$, one can prove that the bound $B_I(x) \ll x^{-1} (\log x)^{k-1}$.

Using the bounds of $E(x)$ and $B_I(x)$, together with equations \eqref{Fun} and \eqref{FinalPlug}, one can conclude that
\begin{equation} \label{ConclusionEq}
  \sum_{n_1,\dots,n_k \leq x} \delta_\mathcal{G}(n_1,\dots,n_k) = D'(1,\dots,1) x^k + O \left( x^{k-1} (\log x)^{k-1} \right).
\end{equation}
Combining with equation \eqref{DprimeAllOnes}, this completes the proof of Theorem \ref{AsymThm}.

\bibliographystyle{abbrv}
\bibliography{compiled_bibliography}

\end{document}